\newcommand{\wh}[1]{\widehat{#1}}
\newcommand{\bl}{\big\langle}
\newcommand{\br}{\big\rangle}
\newtheorem{theorem}{Theorem}
\def\blue{\color{black}}
\def\Blue{\color{black}}
\def\red{\color{red}}
\def\ie{i.e.}
\def \blue{\color{black}}
\def \red{\color{black}}
\newcommand{\comment}[1]{}
\begin{document}

\title{The Derivation and Approximation of Coarse-grained Dynamics from Langevin Dynamics}
\author{Lina Ma}
\email{linama@psu.edu}
\affiliation{Department of Mathematics, the Pennsylvania State University, University Park, PA 16802-6400, USA.}%
\author{ Xiantao Li}
\email{xli@math.psu.edu}
\affiliation{Department of Mathematics, the Pennsylvania State University, University Park, PA 16802-6400, USA.}%
\author{Chun Liu}
\email{liu@math.psu.edu}
\affiliation{Department of Mathematics, the Pennsylvania State University, University Park, PA 16802-6400, USA.}%

\date{\today}

\begin{abstract}
We present a derivation of a coarse-grained description, in the form of a generalized Langevin equation, from the Langevin dynamics model that describes the dynamics of bio-molecules. The focus is placed on the form of the memory kernel function, {\blue the colored noise}, and the second fluctuation-dissipation theorem {\blue that connects them}. Also presented is a hierarchy of approximations for the memory and random noise terms, using rational approximations in the Laplace
domain. These approximations offer increasing accuracy. More importantly, they eliminate the need
to evaluate the integral associated with the memory term at each time step.   {\blue Direct sampling of the colored noise can also be avoided within this framework.} Therefore, the numerical implementation of the generalized Langevin equation is much more efficient.
\end{abstract}


\maketitle


\section{Introduction}
One of the most outstanding problems in molecular modeling of bio-molecular systems is the construction of coarse-grained (CG) models, in which {\blue only a few} degrees of freedom are explicitly retained. The importance of such development and { other various}  perspectives have been discussed in many review papers and books   \cite{Leach01,espanol2004statistical,riniker_developing_2012,noid_perspective:_2013,voth2008coarse}. {\blue These efforts have been driven by  the fact that direct simulations using an all-atom model are restricted by the small {\Blue time steps},  typically femto-seconds, while the time scale of interest is at least microseconds. Therefore, a CG model that allows one to efficiently explore events occurring on long time scales is critical to the understanding of molecular conformations and ultimately, biological functions.} The last two decades have witnessed a great deal of progress toward this goal \cite{noid_perspective:_2013,riniker_developing_2012,espanol2004statistical}. While many existing models have demonstrated their capability to recover (or predict) equilibrium properties, a systematic framework to incorporate dynamic properties is still challenging. In particular, as has been observed in the development of  MARTINI \cite{monticelli2008martini}, a remarkably successful coarse-grained force field, the effective friction mechanism is missing in such coarse-graining procedure. {\blue This observation has also been a strong motivation for the current work.}

A very important theoretical development in coarse-graining molecular models is the projection approach, originally formulated by Mori and Zwanzig \cite{Mori1965b,Zwanzig73}. This approach  is directly based on the dynamics of the full system, rather than the equilibrium statistical properties.
Such formalism (or similar reduction procedure) has recently been widely used to derive CG models based on the deterministic  Newton's equations of motion \cite{curtarolo_dynamics_2002,IzVo06,kauzlaric2011three,kauzlaric2011bottom,lange2006collective,Li2009c,oliva2000generalized,stepanova_dynamics_2007,LiXian2014,li2015incorporation}, known as molecular dynamics (MD) models.  The typical result is a {\blue \it generalized Langevin equation} (GLE), with a memory (or frictional) kernel  implicitly incorporating the influences of the {\blue degrees of freedom that have been projected out}. See the early works \cite{tully1980dynamics,guardia1985generalized,ciccotti1981derivation,AdDo76} for various derivations for interacting particle systems. Recently, there has been increasing interest in modeling complex dynamical systems using GLEs \cite{Li14,kauzlaric2011bottom,IzVo06,LiE07,fricks2009time,Darve_PNAS_2009,curtarolo_dynamics_2002,ChSt05}. The GLE is also driven by a stochastic force, which can be attributed to the uncertainty from the initial condition \cite{ChHaKu02}. Various schemes have been proposed to compute the memory and the random noise terms\cite{berkowitz1983generalized,berkowitz1981memory,hijon2006markovian,oliva2000generalized,LiXian2014,li2015incorporation}, but both of them are highly nontrivial due to the nonlocality of the kernel function in time. But in general, {\blue the practical issue of predicting correct dynamics properties} still remains. 

{\Blue In contrast to the large body of works for deterministic models, coarse-graining a stochastic system still remains a challenge}. As an initial attempt to treat a full molecular model that is {\it stochastic} in nature, we {\blue start with {\it Langevin dynamics}, which arises naturally from a molecular system in solvent \cite{Schlick2002}. {\blue The influence of solvent is not explicitly included. Rather it is modeled by a damping term and a white noise.  } In this sense, the Langevin dynamics model is {\it already} a coarse-grained model since the solvent particles have been removed. However, simulating the dynamics of a macromolecule using such a model is still challenging due to the number of atoms involved, and  the large intrinsic vibration frequencies, which requires small time steps.  Motivated by these facts,  we consider a further reduction, aiming to derive a model with even fewer variables. These CG variables could represent averaged atomic degrees of freedom, such as the center of mass of a cluster of atoms, or torsion angles. Typically, the time scale will be significantly improved, since the fast components that require small scale simulations are projected out. }

In  coarse-graining the Langevin dynamics model, treating the {\blue stochastic random term} usually requires {\blue special} considerations, compared to its deterministic counterpart. For instance,  the Mori-Zwanzig formalism \cite{Mori1965b,Zwanzig73} is not directly applicable to stochastic differential equations (SDE), since the semi-group operator is not available. We  {\blue therefore suggest an appropriate linearization, and then partition the full dynamics into subspaces. The variables associated with the subspace orthogonal to the CG variables are then eliminated by direct substitution.  }

The CG model that we {\blue have derived} is a slight generalization of the GLE, with an additional Markovian damping term. Further, we prove the {\blue \it fluctuation-dissipation theorem} (FDT) \cite{Kubo66} for this CG model, and the theorem takes a combined form of the first and second FDT (see  \cite{Kubo66}  for the distinction between these two FDTs). To our knowledge, such models have not been reported in the literature. In particular, we show that the memory kernel function depends on the damping coefficient in the full Langevin dynamics. {\blue Establishing} such direct connection is important for understanding the friction mechanism in the CG dynamics.

Although the new GLE model properly incorporates the influence of the degrees of freedom that have been removed,
the numerical implementation faces several challenges, as has been noted in many previous works \cite{berkowitz1983generalized,berkowitz1981memory,hijon2006markovian,oliva2000generalized}. {\blue In particular,  a direct solution procedure would involve the computation of a matrix function at every step, and the dimension of the matrix is almost the same as the dimension of the full system.} To alleviate the computational burden, we suggest an alternative computational approach, in which the kernel function is approximated by a rational function in the Laplace space. {\blue The  goal is to find an efficient approximation so that only a few parameters need to be calculated a priori. In this paper, we make use of the explicit formula for the memory kernel  to extract the numerical parameters in the approximate models.} This has several practical advantages. First, the approximate model in the time domain can be written as an extended system of SDEs, which are {\it memory-less}. As a result, no datum needs to be stored and no integral needs to be evaluated at each time step. This significantly reduces {\blue the computational cost since  numerical quadrature for the memory term is not needed}. Secondly, the random noise in the GLE can be approximated {\it indirectly} by introducing white Gaussian noises in the extended system. Therefore, there is no need to sample the random noise in the GLE, which otherwise requires non-trivial effort, e.g., Fourier-transform over long time period, or singular value decomposition of the covariance matrix \cite{li2015incorporation}.   We will provide the explicit forms of these approximations and illustrate how to determine the covariance of the noise to exactly satisfy the FDT.

{\blue The rational approximation is a novel, and yet quite flexible approach to model the memory effects by embedding a nonlocal model within a local one. In principle, there are various ways to determine the coefficients in the rational function. In this paper, we will test an idea of using the limiting values, both at zero and infinity, as interpolation points. But it is clear that this interpolation scheme may not be optimal: One may introduce other fitting procedures to obtain better accuracy. We leave this issue to future works.}

In general, evaluating the accuracy of a CG model with both mean force and damping coefficients is subtle,  partly because the mean force in the GLE needs to be parameterized and calibrated a priori. The error from that effort and the error from the rational approximation of the kernel function is difficult to separate in the present approach. Therefore, we consider a simple case in which the full model is linearized instead of a more complicated function form, with coefficients computed from a principal component analysis (PCA). This ensures that the covariance of the atomic coordinates and momenta are exactly captured. {\blue Starting with the harmonic model as the exact full model}, we are able to compute the memory function  explicitly. {\blue One implication is that the mean force is linear, which perhaps is the simplest and and most efficient coarse-grained force field, and  the model can be viewed as an elastic network model (ENM)\cite{delarue2002simplified,atilgan2001anisotropy}.  This model preserves the correct vibration models, and provides an ideal test problem for error assessment.  Further, the velocity auto-correlation can be computed analytically as well, allowing us to 
examine the accuracy without numerical and sampling errors.}

Perhaps the closest work to the present approach is the normal mode partition method for Langevin dynamics \cite{sweet2008normal}, in which the Langevin dynamics is projected to a subspace and its orthogonal complement space. Various truncation steps are taken to simplify the model. The method in \cite{sweet2008normal} is at the level of numerical algorithms.  What is presented in this paper also starts with such subspace partitions. However, instead of introducing a numerical algorithm at discrete time steps, we derive a CG model, and then introduce a systematic approximation procedure afterwards.

\section{Mathematical derivation}

\subsection{The full model}
We start with the full Langevin dynamics model with $N$ atoms, 
\begin{equation}\label{eq: ld0}
\left\{
 \begin{aligned}
   \dot{x}= & v,\\
    M \dot{v}=& F(x) - \Gamma v + f(t),   
 \end{aligned}\right.
\end{equation}
where $x=(x_1,x_2,\dots,x_N)$ denotes the coordinates of all the atoms, $M$ is a diagonal matrix  containing the mass of each atom,  $F(x)$ is the force from an empirical potentials $V(x):$ $F=-\nabla V$, $\Gamma$ denotes the damping coefficient for the friction term, and $f(t)$ is a stochastic force, usually modeled by a Gaussian white noise, which satisfies the fluctuation-dissipation theorem (FDT),
\begin{equation}\label{eq: fdt}
\bl f(t),f(t')^T\br =2k_BTM^{-1}\Gamma\delta(t-t').
\end{equation}
The FDT is crucial to ensure that the system reaches the correct equilibrium state. 

By introducing the following scaling, $$
\tilde{x} = M^{\frac{1}{2}}x,\quad \tilde{F} = M^{-\frac{1}{2}}F,\quad 
\tilde{\Gamma} = M^{-\frac{1}{2}}\Gamma M^{-\frac{1}{2}}, \quad\tilde{f} = M^{-\frac{1}{2}}f,
$$
we can remove the mass matrix from the system,  and work with a normalized system with unit mass for each atom. Rewriting every term without the tilde, the new system is expressed in the following form, 
\begin{equation}\label{scaleLGE}
\left\{
 \begin{aligned}
   \dot{x}= & v,\\
     \dot{v}=& F(x) - \Gamma v + f(t).   
 \end{aligned}
 \right.
\end{equation}
One can easily show that the new random noise $f(t)$ still obeys the FDT, \ie,
\begin{equation}
\bl f(t), f(t')^T \br=2k_B T\Gamma \delta(t-t').
\end{equation}

\subsection{A subspace partition}

There are many existing methods to implement the Langevin dynamics model \eqref{scaleLGE} numerically. However solving this system in its full form at every time step can be expensive, due to the large number of atoms, and the small time steps determined by the  stability condition of the numerical methods. In contrast, coarse-grained (CG) models that involve much fewer variables are more attractive. This will be the primary focus of this paper. To begin with, we let
$Y$ and $Y^\perp$ be two orthogonal subspaces, generated by basis functions $\Phi$ and $\Psi,$ respectively. Namely,
\begin{equation}
   Y= \text{Range}(\Phi), \quad Y^\perp= \text{Range}(\Psi).
\end{equation}
We assume the dimension of $Y$ to be $m$, where $m$ is much smaller than $3N$. The matrix $\Phi$ has  dimension $3N\times m$, and it will span  the space $Y$, the subspace generated by the CG variables. To ensure orthogonality, we  choose $\Psi$ with dimension $3N\times (3N-m)$ such that the following identities   holds, 
$$
\Phi^T\Psi=0, \quad \Phi^T \Phi = I_{m\times m}, \quad \Psi^T \Psi = I_{(3N-m)\times (3N-m)}.
$$

 We will project the Langevin equations into $Y$ and $Y^\perp$. For this purpose, we decompose the solution $x$ in the following form,
\begin{equation}\label{eq: qxi}
 x=\Phi q + \Psi \xi,
\end{equation}
where $q \in \mathbb{R}^m$ and $ \xi \in \mathbb{R}^{3N-m} $ are nodal values associated with the basis vectors in $\Phi$ and $\Psi$. There are many choices for the matrix $\Phi$. One choice can be the eigenvectors corresponding to low vibrational modes, {\blue as in the work of normal mode partition \cite{sweet2008normal},} or  the rotation and translation blocks (RTB) that describe the rigid-body motions. Here we will first focus on the  general framework and postpone the specific choices to later discussions.

Our next step is to rewrite the first-order system \eqref{scaleLGE} into a second-order equation, together with the decomposition \eqref{eq: qxi},  
$$
\Phi \ddot{q} +  \Psi \ddot{\xi} = F(\Phi q + \Psi \xi) - \Gamma \Phi \dot{q} - \Gamma \Psi \dot{\xi}+f(t).
$$
Our goal is to eliminate $\xi$, which represents the degrees of freedom associated with $Y^\perp$.

To proceed, by left multiplying both sides by $\Phi^T$, we turn the equation above into,
\begin{equation}\label{eq: q}
\ddot{q}  = \Phi^T F(\Phi q + \Psi \xi) - \Phi^T \Gamma \Phi \dot{q} - \Phi^T \Gamma \Psi \dot{\xi}+\Phi^T f(t).
\end{equation}

Notice  $\xi$ disappeared from the left hand side thanks to the orthogonality condition.  
Similarly, we left multiply both sides by $\Psi^T$, and arrive at a second order differential equation for $\xi$, 
\begin{equation}\label{eq: xi}
 \ddot{\xi} = \Psi^T F(\Phi q + \Psi \xi) - \Psi^T \Gamma \Phi \dot{q} - \Psi^T \Gamma \Psi \dot{\xi}+\Psi^T f(t).
\end{equation}

The nodal values $q$ and $\xi$ are still coupled together for now.
To eliminate the variable $\xi$, we solve equation \eqref{eq: xi} analytically, together with a subsequent substitution into
\eqref{eq: q}. This is clearly intractable due to the nonlinearity of $F$ in \eqref{eq: xi}. Therefore,  we simplify the derivation by using a linearization $F=-Ax$. Since the matrix $A$ can be related to the covariance of the atomic coordinates, it can be determined from the principal component analysis (PCA). Namely we have
$$
\bl x(t), x(t)^T \br = k_BT A^{-1},
$$
which can be directly computed from a molecular simulation. Such  linear approximation is a necessary route in many 
coarse-graining procedures \cite{IzVo06,Li2009c,oliva2000generalized,stepanova_dynamics_2007}. In principle, higher order expansions  can  be introduced, e.g., \cite{ChSt05,WuLiLi2015}, but the derivation is exceedingly complicated. 

Recall that the basis functions are normalized, i.e., $\Phi^T\Phi=I$ and $\Psi^T\Psi=I$. We further define the following terms,
$$ A_{11} = \Phi^T A \Phi, \quad A_{12} = \Phi^T A \Psi, \quad \Gamma_{11} = \Phi^T\Gamma\Phi, \quad \Gamma_{12} = \Phi^T \Gamma \Psi,  {\blue \quad f_1=\Phi^T f(t),}
$$
$$
 A_{21}=\Psi^T A \Phi, \quad A_{22}=\Psi^TA \Psi, \quad \Gamma_{21} = \Psi^T\Gamma \Phi, \quad \Gamma_{22}=\Psi^T\Gamma\Psi, {\blue \quad f_2=\Psi^T f(t)}.
$$

Since it is usually easier to work with first order systems, we now convert the higher order equations back to a coupled first order system, with $p$ representing the momentum  of $q$,  and $ \eta$ being the momentum of  $\xi$. Notice  that we are dealing with unit mass system, the momentum also represents velocity. The first order system reads
\begin{equation}\label{eq: lg'}
\left\{
 \begin{aligned}
   \dot{q}= &  p, \\
   \dot{p}=&\Phi^T F(\Phi q) - A_{12} \xi  - \Gamma_{11} p - \Gamma_{12}  \eta +{\blue f_1(t)},\\
   \dot{\xi}= &  \eta, \\   
   \dot{\eta}= & -A_{21} q  - A_{22}\xi - \Gamma_{21} p - \Gamma_{22}  \eta + {\blue f_2(t)}.
 \end{aligned}
 \right.
\end{equation}
So far,  we have not done any dimension reduction yet, and these equations are equivalent to the original dynamics within the linear approximation. {\blue In addition, the random forces $f_1(t)$ and $f_2(t)$ are projections of the original white noise. Since $f_1(t)$ is directly influencing the CG variable $p$, it will be retained in the CG model. On the other hand, the influence of $f_2$ on the CG variables will be revealed by the coarse-graining procedure. } 
\subsection{The reduction of the number of degrees of freedom}

We take $(q,p)$  as the quantities of interest, i.e., the CG variables, solve $\xi, \eta$ explicitly, substitute them back to the equations for ($q, p$),  and thus eliminate ($\xi$,$\eta$) in the system \eqref{eq: lg'}. Detailed computations are shown in Appendix \ref{memdef} due to the lengthy calculations.  The CG equations for $(q, p)$ are then given by,  
\begin{equation}\label{eq: GLE}
\left\{
\begin{aligned}
 \dot{q}= &  p, \\
   \dot{p}=& F_\text{eff}(q) - \Gamma_{11} p - \int_0^t \theta(t-\tau) p(\tau)d\tau + \widehat{f}.
\end{aligned}
\right.
\end{equation}
Here $F_\text{eff}(q) $ is an effective force field for the CG variables.

We  refer to $z=\int_0^t \theta(t-s) p(s)ds$ as the memory term, and $\theta(t)$ as the {\it memory kernel function}, which is expressed in terms of a matrix exponential \cite{golub2012matrix}, 

\begin{equation}\label{eq: theta}
 \theta(t)= 
 \big[ A_{12}, \;\Gamma_{12}\big] e^{-Gt} 
 \left[
\begin{array}{cc}
 A_{22}^{-1} &  0\\
 0 & -I
 \end{array}
 \right]
 \left[
\begin{array}{c}
  A_{21}  \\
  \Gamma_{21}
 \end{array}
\right],
\end{equation}
{\blue where the matrix $G \in \mathbb{R}^{(6N-2m)\times (6N-2m)}$  is defined as,}
\begin{equation}\label{eq: Gmat}
G=
\left[
\begin{array}{cc}
 0 & -I\\
 A_{22} &\;\;\;\; \Gamma_{22} 
 \end{array}
 \right] .
\end{equation}

What complicates the derivation from \eqref{eq: lg'} is the presence of the stochastic noise $f_2(t)$ in the last equation. 
With lengthy calculations, we have shown that $\widehat f(t)$ is a combined Gaussian random noise. It is a stationary Gaussian random process with mean zero, satisfying the second fluctuation-dissipation theorem:
\begin{equation}\label{eq: fdt3}
 \bl \widehat{f}(t) \widehat{f}(t')^T \br = 2k_B T {\blue \Gamma_{11}} \delta(t-t') + k_B T \theta(t-t').
\end{equation}
Interestingly, this takes a combined from of the first and second FDT. The proof of this FDT is provided in the Appendix \ref{fluc}. Stationary Gaussian processes with mean zero are uniquely determined by their time correlation functions  \cite{Doob44}. Therefore, the CG model \eqref{eq: GLE} is closed once the memory kernel is known.

\section{Further Approximation of the Generalized Langevin Equations}
Solving the GLEs \eqref{eq: GLE} directly is clearly not practical: On one hand, one needs to keep
the history of the solution in order to compute the memory term; on the other hand, evaluating the memory function
at each time step is very expensive due to the large dimensionality of the matrix $G$ (the size is $(6N-2m)\times (6N-2m)$). {\blue For example, direct evaluation of $\theta(t)$ involves the computation of the matrix exponential $\exp(-Gt)$, which in general is quite expensive \cite{golub2012matrix}.}  It is thus natural to develop algorithms to approximate the memory term to make the CG model \eqref{eq: GLE} easier to implement, and become
 truly useful in practice.

{\blue In order to approximate the memory term, we propose a general approximation method, which will address these issues under the same framework. Rather than targeting the time-domain values of the memory kernel directly, we will work with its Laplace transform. The coefficients in our approximation, which only need to be computed once, can be determined by fitting or interpolation a priori.  As an example, we first present an interpolation procedure similar to the standard Hermite interpolation in numerical analysis.}
 This interpolation
 requires the following terms: \( \int_0^\infty \theta(t)dt, \quad \theta(0), \quad \theta'(0)\) etc, which can all be computed from the explicit expression of $\theta(t)$ \eqref{eq: theta}.  In particular, we define the {\it moments},
\begin{equation}\label{eq: moms}
 M_0= \theta(0), \;\;M_1= \theta'(0), \;\;M_2= \frac{1}{2!} \theta''(0),\;\; \cdots, \;M_\ell= \frac{1}{\ell!} \theta^{(\ell)}(0),\;\cdots, \;\;M_\infty=\int_0^\infty \theta(t)dt.
\end{equation}

The first approximation is to replace the memory function by a delta function, i.e.,
\begin{equation}
 \theta(t) \approx M_\infty \delta(t),
\end{equation}
which  leads to the approximation of memory term,
\begin{equation}
z\approx M_\infty {p}(t).
\end{equation}
Clearly this results in an added damping to the dynamics. Therefore, we will define $\Gamma_{\text{add}}= M_\infty$. This simple selection ensures that $\int_0^\infty \theta(t)dt$ is preserved, mimicking a Green-Kubo type of formula. It predicts the correct long-time behavior of the dynamics. The resulting model is still a Langevin dynamics model, which 
will be referred to as the {\bf zeroth-order approximation}. {\blue At the same time, we still need to ensure that the second FDT \eqref{eq: fdt3}
  is satisfied here. Therefore we need to add an appropriate noise such that equation (\ref{eq: fdt3}) holds. }  More specifically, we have,
\begin{equation}
\left\{
\begin{aligned}
 \dot{q}= &  p, \\
   \dot{p}=&  F_\text{eff}(q) - (\Gamma_{11}+ \Gamma_{\text{add}}) p + {\blue \widehat{f}}.
\end{aligned}
\right.
\end{equation}
Here $\blue \widehat{f}$ is a white noise, satisfying the FDT,
\begin{equation}
 \bl {\blue \widehat{f}(t) \widehat{f}(t')}^T \br = 2k_B T \big(\Gamma_{11}  + \Gamma_{\text{add}} \big) \delta(t-t').
\end{equation}
Since  the zeroth  order approximation  is in the same form as the Langevin dynamics \eqref{scaleLGE}, the implementation is straightforward. Many methods are available \cite{helfand1979numerical,leimkuhler2011comparing,skeel1999integration,van1982algorithms,wang2003analysis}. One only needs to change a few parameters in the numerical scheme and work with a much smaller   {\blue number of variables}. 

{\blue In light of the second FDT \eqref{eq: fdt3}, we observe that $M_\infty$ is analogous to the correlation time, and therefore represents important time scales. Known as the Markovian approximation, the approximation  by $M_\infty$ has been used in many other works \cite{kauzlaric2011bottom,hijon2006markovian,hijon2010mori}, and as observed in many numerical tests, the approximation is only satisfactory where there is significant time scale separation. But in general it is inadequate. Next we will present higher order approximations.}

\bigskip

Our general approximation scheme is based on the Laplace transform of $\theta(t)$, defined as,
\begin{equation}\label{eq: Th}
 \Theta(\lambda)=\int_0^{+\infty}\theta(t) e^{-t/\lambda} dt.
\end{equation}
Notice that we have chosen to work with the variable $\lambda$ (which has unit of time), instead of the usual choice $s$ ($s=1/\lambda$).  As an example, we approximate the Laplace transform of $\theta$ by a rational function,
\begin{equation}
 \Theta(\lambda) \approx R_{1,1}(\lambda), \quad  R_{1,1}(\lambda) \overset{\text{def}}{=} [I - \lambda B]^{-1} C\lambda,
\end{equation}
and the matrices $A$ and $B \in \mathbb{R}^{m\times m}$ are to be determined. More specifically, we enforce the following two conditions:
\begin{equation}\label{eq: mom1}
  \begin{aligned}
    \Theta'(0)=& R_{1,1}'(0),\\
    \Theta(+\infty)=& R_{1,1}(+\infty).
  \end{aligned}
\end{equation}

Direct calculations yield, 
\begin{equation}
  \Theta'(0)= M_0, \;\;  \Theta(+\infty)=M_\infty.
\end{equation}
After solving the equations \eqref{eq: mom1}, we find that, 
\begin{equation}\label{eq: AB}
 \begin{aligned}
  C=&M_0,\\
  B=&-M_0 M_\infty^{-1}.
  \end{aligned} 
\end{equation}

With this rational approximation, the memory term satisfies an additional equation, 
\begin{equation}
 \dot z= Bz + C{p} + \zeta,
\end{equation}
and $\zeta(t)$ is an added white noise, which will facilitate the approximation of the colored noise $\wh{f}$ in the GLE \eqref{eq: GLE}. {\blue This is motivated by the fact that the Gaussian noise $\wh{f}$ in the GLE is correlated in time. We construct a colored Gaussian noise through a first order stochastic differential equation. The resulting stochastic force is an Ornstein-Uhlenbeck process. Such an approximation scheme is known as Markovian embedding \cite{bao2005non}.} It effectively eliminates the need to sample the colored noise $\wh{f}$ directly.

This amounts to an approximate model, which will be referred to as the {\bf first-order approximation}, given by,
\begin{equation}\label{eq: 1st-order}
\left\{
\begin{aligned}
 \dot{q}= & p, \\
   \dot{p}=&F_\text{eff}(q)- \Gamma_{11} p - z + {\blue f_1},\\
   \dot{z}=& Bz + C{p} + {\blue  \zeta(t)}.
\end{aligned}
\right.
\end{equation}

It is not yet clear how the memory kernel and the random noise $\wh{f} $ are approximated in the time domain, and more importantly, whether they still satisfy the FDT \eqref{eq: fdt3}. 
To demonstrate that this procedure indeed leads to a consistent approximation of the memory term $z(t)$ and the noise  $\wh{f}$, we formulated the following theorem. In particular, we provide a simple formula for the covariance of the additive noise $\zeta(t).$
\begin{theorem}
  Assume that $z(0)$ is a Gaussian random variable with mean zero and covariance $k_B T C$ given by \eqref{eq: AB}. Further,  assume that 
  the noise $\zeta(t)$ has covariance $\Sigma,$ given by, 
  \begin{equation}\label{eq: sigma}
  \Sigma= -2k_B T CB.
\end{equation}
  Then the first-order model \eqref{eq: 1st-order} is equivalent to an approximation of the GLE, in which the
  memory function is approximated by, 
    \begin{equation}\label{eq: mem1}
 \theta(t)\approx e^{Bt} C, 
\end{equation}
and the second FDT \eqref{eq: fdt3} is {\bf exactly} preserved. 
\end{theorem}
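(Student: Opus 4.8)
The plan is to treat the third equation of \eqref{eq: 1st-order} as a linear (matrix) SDE for $z$ with known forcing $Cp+\zeta$, solve it by Duhamel's formula, and substitute the result back into the second equation so as to recover exactly the structure of the GLE \eqref{eq: GLE}. Writing
\[
 z(t)= e^{Bt}z(0) + \int_0^t e^{B(t-s)}C\,p(s)\,ds + \int_0^t e^{B(t-s)}\zeta(s)\,ds,
\]
the convolution term $\int_0^t e^{B(t-s)}C\,p(s)\,ds$ plays exactly the role of $\int_0^t\theta(t-s)p(s)\,ds$ in \eqref{eq: GLE}, which immediately identifies the approximate kernel $\theta(t)\approx e^{Bt}C$ and establishes \eqref{eq: mem1}. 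The remaining pieces of $-z$ combine with $f_1$ into an effective noise $\widehat f = f_1 - u$, where $u(t)= e^{Bt}z(0) + \int_0^t e^{B(t-s)}\zeta(s)\,ds$, and the task reduces to verifying that this $\widehat f$ obeys \eqref{eq: fdt3} with $\theta$ replaced by $e^{Bt}C$.

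Next I would compute the autocovariance $\langle\widehat f(t),\widehat f(t')^T\rangle$. Assuming, as is natural for a Markovian embedding, that the physical noise $f_1$, the initial datum $z(0)$, and the synthetic noise $\zeta$ are mutually independent, all cross terms vanish and the covariance splits into two pieces. The self-correlation of $f_1=\Phi^T f$ reproduces the singular part: since $\langle f(t),f(t')^T\rangle=2k_BT\Gamma\delta(t-t')$, one gets $\langle f_1(t),f_1(t')^T\rangle=2k_BT\Gamma_{11}\delta(t-t')$, which is precisely the first term of \eqref{eq: fdt3}. The contribution of $z(0)$ and $\zeta$ is the autocovariance of the Ornstein--Uhlenbeck process $u$, and the goal is to show it equals $k_BT\,e^{B(t-t')}C$ for $t\ge t'$ (and its transpose for $t<t'$), matching the second term of \eqref{eq: fdt3}.

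The heart of the argument is the stationary OU covariance identity. Using $\langle\zeta(s),\zeta(s')^T\rangle=\Sigma\,\delta(s-s')$ and $\langle z(0)z(0)^T\rangle=k_BT C$, a direct calculation gives
\[
 \langle u(t),u(t')^T\rangle = e^{B(t-t')}\Big[\, e^{Bt'}(k_BT C)e^{B^Tt'} + \int_0^{t'} e^{Bs}\,\Sigma\, e^{B^Ts}\,ds\,\Big],\qquad t\ge t'.
\]
To obtain the clean form $k_BT e^{B(t-t')}C$ I would demand that the bracketed matrix be constant in $t'$ and equal to $k_BT C$; differentiating in $t'$ shows this is equivalent to the Lyapunov relation $B(k_BT C)+(k_BT C)B^T+\Sigma=0$, i.e. $\Sigma=-k_BT(BC+CB^T)$. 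I would then invoke the symmetry of $A$ and $\Gamma$, which makes $M_0$ and $M_\infty$ symmetric and yields $BC=CB^T$ (equivalently the symmetry of $e^{Bt}C$), to reduce this to the stated covariance \eqref{eq: sigma}, thereby concluding that \eqref{eq: fdt3} holds exactly with the approximate kernel.

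The main obstacle I anticipate is twofold. First, one must justify that initializing $z(0)$ at the stationary covariance $k_BT C$ annihilates the transient entirely, so that $\langle u(t),u(t')^T\rangle$ equals $k_BT e^{B(t-t')}C$ for \emph{all} $t,t'\ge0$ rather than only asymptotically; this is exactly what the Lyapunov condition enforces, and it is the reason the precise value of $\Sigma$ in \eqref{eq: sigma} is forced rather than free. Second, one must establish the algebraic symmetry $BC=CB^T$ from the explicit forms \eqref{eq: theta}--\eqref{eq: Gmat}; this relies on $A=A^T$, $\Gamma=\Gamma^T$, and the observation that $G\,\mathrm{diag}(A_{22}^{-1},-I)$ is symmetric, whence $e^{-Gt}\mathrm{diag}(A_{22}^{-1},-I)=\mathrm{diag}(A_{22}^{-1},-I)e^{-G^Tt}$ and so $\theta(t)$, $M_0$, $M_\infty$ are all symmetric. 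Once this symmetry is in hand, handling the $\min(t,t')$ in the stochastic double integral and the $t<t'$ case by transposition is routine.
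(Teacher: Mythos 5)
Your proposal is correct and follows essentially the same route as the paper's own proof: solve the $z$-equation by variation of constants, identify the convolution term as the approximate memory kernel $e^{Bt}C$, and impose the Lyapunov condition $\Sigma=-k_BT(BC+CB^T)$ together with the symmetry $BC=CB^T$ (from the symmetry of $M_0$ and $M_\infty$) to make the remaining part a stationary Ornstein--Uhlenbeck process whose covariance $k_BT\,e^{(t-t')B}C$ matches the approximate kernel in \eqref{eq: fdt3}. Your write-up is somewhat more explicit than the paper's (deriving the Lyapunov relation by differentiating the covariance, and noting the independence assumptions needed to kill the cross terms), but these are elaborations of the same argument rather than a different one.
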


\begin{proof}
 This demonstrates how the memory function and random noises in the GLEs are consistently approximated in this approach. To show the equation \eqref{eq: sigma}, we let the covariance of $z(0)$ be $C$, and the covariance of  $\zeta(t)$  be $\Sigma$, which is to be determined.  It is clear that we can write the solution of the last equation {\blue of system (\ref{eq: 1st-order})} as follows using the variation of constant formula,
\begin{equation}
  z(t)= e^{Bt}z(0) + \int_0^t e^{B(t-\tau)} \zeta(\tau) d\tau+ \int_0^t e^{B(t-\tau)} {\blue C} p(\tau) d\tau.
\end{equation}
Therefore, the memory term is approximated by the third term with the kernel function approximated by \eqref{eq: mem1}. 

Meanwhile, the first two terms make a stationary Gaussian process, denoted by $g(t),$ if the following Lyapunov equation  \cite{risken1984fokker} holds,{\blue
\begin{equation}
  k_B T \big[ CB^T+BC \big]=-\Sigma.
\end{equation}
Observe that $B$ and $C$ are determine from \eqref{eq: AB}. In particular, we have $BC=CB^T.$} Thus a simple substitution leads to \eqref{eq: sigma}.

{\blue Finally, a substitution of $z(t)$ into the second equation in \eqref{eq: 1st-order} shows that the random process $g(t)$ will become an approximation of $\wh{f}(t).$ With direct calculations, we find that,
\[ \big\langle g(t) g(t')^T \big\rangle = k_B T e^{(t-t')B} C,\]
for any $t'\le t.$ In light of \eqref{eq: mem1}, we find that the approximate kernel function and the approximate random noise still satisfy the second FDT \eqref{eq: fdt3}.}
\end{proof}

\bigskip

This approach can be easily extended to higher order. For example, we can choose a rational function as follows,

\begin{equation}\label{eq: r22}
 \Theta(\lambda) \approx R_{2,2}(\lambda), \quad  R_{2,2}(\lambda) \overset{\text{def}}{=} [I - \lambda B_0 - \lambda^2 B_1 ]^{-1} [\lambda C_0+\lambda^2C_1].
\end{equation}
{\blue The parameters $B_0, B_1, C_0$ and $C_1$ will be determined from an interpolation procedure. We will adopt the conventional Pad\'e approximation, and expand both $\Theta$ and $R_{2,2}$ around $\lambda=0.$ Also known as moment matching, the Pad\'e approximation will ensure that the first few coefficients match with those in the rational function. This leads to the following matching conditions, referred to as {\it moment equations},}
\begin{equation}\label{eq: mom2}
  \begin{aligned}
    \Theta'(0)=& R_{2,2}'(0),\\
    \Theta''(0)=& R_{2,2}''(0),\\
    \Theta'''(0)=& R_{2,2}'''(0),\\
    \Theta(+\infty)=& R_{2,2}(+\infty).
  \end{aligned}
\end{equation}
{\blue This last condition, which is not from the standard Pad\'e approximation, is enforced here to incorporate the limit as $\lambda \to +\infty.$}

 With direct calculations, 
we have the equations for the coefficients,
\begin{equation}\label{eq: m2}
 \begin{aligned}
  C_0=&M_0,\\
  C_1 + B_0 C_0=&M_1,\\
  B_0 M_1  + B_1 C_0 =& M_2,\\
  C_1=&-B_1 M_\infty.
  \end{aligned} 
\end{equation}
Here, the moments $M_i$s have been defined in \eqref{eq: moms}.

By substituting the first and last equations into the second and third equations, we can simplify the equations into a 2-by-2 block system,
\[ 
\begin{aligned}
-B_1 M_\infty + B_0 M_0= &M_1, \\
 B_0M_1 + B_1 M_0= &M_2,
 \end{aligned}\]
from which the coefficients $B_0$ and $B_1$ can be determined. Then $C_0 $ and $C_1$ are immediately available from the first and last equations in \eqref{eq: m2}.  

As in the first order approximation, we can also eliminate the memory by introducing auxiliary variables that satisfy  additional equations. {\blue To see this, we start with the memory term $z$ and with the second order rational approximation, we have,
\[ s^2 Z - s B_0 Z - B_1 Z = s C_0 P + C_1 P,\]
 where $Z$ and $P$ are respectively the Laplace transform of $z(t)$ and $p(t).$ In order to convert this equation to the time domain, we need the initial values for $z(t)$. In particular, we have $z(0)=0,$ and by direct differentiations, we have $\dot{z}(0)= \theta(0) p(0).$

Next using the fact that the Laplace transform of $\dot z$ is given by $sZ-z(0)$ and the Laplace transform of $\ddot z$ is given by $s^2 Z -s z(0) - \dot z(0),$ we can convert this equation to the time domain,
\[ \ddot z - B_0 \dot z - B_1 z = C_0 \dot p + C_1 p,\]
provided that $C_0=\theta(0), $ which is exactly the first matching condition in \eqref{eq: m2}. }
 
We can write this second order equation into a first order form, by introducing another variable $z_1$: $z_1=\dot z - B_0 z$. They satisfy the following  differential equations,
\begin{equation}\label{eq: 2nd-order}
\left\{
\begin{aligned}
 \dot{q}= & p, \\
   \dot{p}=&F_\text{eff}(q)- \Gamma_{11} p - z +{\blue f_1},\\
   \dot{z}_1=& B_1 z + C_1 p + {\red \zeta_1(t)},\\
   \dot{z}=& z_1 + B_0 z + C_0 p + \zeta(t).
\end{aligned}
\right.
\end{equation}
{\blue Again, we have added a white noise $\zeta(t)$ and $\zeta_1(t)$ to each additional equation, which will lead to an approximation of the colored noise $\wh{f}(t)$ in the exact CG model \eqref{eq: GLE}.}

We would like to point out that approximating the memory kernel using exponential functions has been used in \cite{baczewski2013numerical}, where the memory function is approximated by a sum of exponential functions for the case when the dimension of $q$ is 1. Known as Prony sum, such a method is very useful in approximating convolutional integrals \cite{ou2014reconstruction,jiang2004fast,arnold2003discrete}.  On the other hand, our ansatz is more general, and it is suitable for matrix-valued kernel functions.

The corresponding approximation will be referred to as the {\bf second-order approximation}. In the Appendix \ref{sec: 2nd}, we have shown
how to choose the initial conditions for $z$ and $z_1$, along with the covariance for $\zeta(t)$ and $\zeta_1(t)$, so that the approximation of the
memory and random noise terms are consistent, in the same spirit as Theorem 1. The result can be summarized as the second theorem,
\begin{theorem}
  Assume that $z(0)$ and $z_1(0)$ are Gaussian random variables with mean zero and appropriate covariance. 
  Then the second order model \eqref{eq: 2nd-order} is equivalent to an approximation of the GLE \eqref{eq: GLE}, in which the approximations of the memory kernel and the random noise are consistent in the sense that the second FDT \eqref{eq: fdt3} is {\bf exactly} preserved. 
\end{theorem}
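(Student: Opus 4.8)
The plan is to mirror the argument used for Theorem 1, but now treating the two auxiliary variables $z$ and $z_1$ as a single first-order Ornstein--Uhlenbeck system. First I would stack the last two equations of \eqref{eq: 2nd-order} into the vector $w=(z,z_1)^T$, so that $\dot w = \mathcal{B} w + \mathcal{C} p + \bs{\zeta}$ with
\[
\mathcal{B}=\begin{pmatrix} B_0 & I\\ B_1 & 0\end{pmatrix},\quad \mathcal{C}=\begin{pmatrix} C_0\\ C_1\end{pmatrix},\quad \bs{\zeta}=\begin{pmatrix}\zeta\\ \zeta_1\end{pmatrix}.
\]
Applying the variation-of-constants formula and writing $z=P_1 w$ with the projection $P_1=[\,I,\,0\,]$ splits $z$ into a deterministic memory part $\int_0^t P_1 e^{\mathcal{B}(t-\tau)}\mathcal{C}\,p(\tau)\,d\tau$ and a purely noise-driven part $g(t)=P_1\big(e^{\mathcal{B}t}w(0)+\int_0^t e^{\mathcal{B}(t-\tau)}\bs{\zeta}(\tau)\,d\tau\big)$. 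This identifies the approximate kernel as $\theta(t)\approx P_1 e^{\mathcal{B}t}\mathcal{C}$, and a short Laplace computation (evaluating $P_1(sI-\mathcal{B})^{-1}\mathcal{C}=(s^2 I - sB_0 - B_1)^{-1}(sC_0+C_1)$ and substituting $s=1/\lambda$) shows its transform equals $R_{2,2}(\lambda)$, so the kernel part of the approximation is consistent with the construction preceding the theorem.

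The substance of the theorem is the noise consistency. I would require $g(t)$ to be a stationary, mean-zero Gaussian process, which forces $w(0)$ to have a covariance $\mathbf{P}$ equal to the stationary covariance of the system; equivalently $\mathbf{P}$ solves the Lyapunov equation $\mathcal{B}\mathbf{P}+\mathbf{P}\mathcal{B}^T=-\mathbf{Q}$, where $\mathbf{Q}=\bl\bs{\zeta},\bs{\zeta}^T\br$. For such a process the two-time correlation is $\bl w(t),w(t')^T\br = e^{\mathcal{B}(t-t')}\mathbf{P}$ for $t\ge t'$, so $\bl g(t),g(t')^T\br = P_1 e^{\mathcal{B}(t-t')}\mathbf{P}P_1^T$. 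Matching this against the requirement $k_B T\,\theta(t-t')=k_B T\,P_1 e^{\mathcal{B}(t-t')}\mathcal{C}$ for all $t\ge t'$ is ensured by the single block-column condition $\mathbf{P}P_1^T=k_B T\,\mathcal{C}$, i.e. the first block column of $\mathbf{P}$ is fixed to $k_B T(C_0,\,C_1)^T$. Since $f_1=\Phi^T f$ is independent of $\bs\zeta$ and of $w(0)$, the delta-correlated term $2k_B T\Gamma_{11}\delta(t-t')$ in \eqref{eq: fdt3} comes entirely from $f_1$, exactly as in Theorem 1, and combining the two pieces yields the full second FDT.

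The remaining work, and the step I expect to be the main obstacle, is to show that these requirements are mutually consistent and determine an admissible covariance. The condition $\mathbf{P}P_1^T=k_B T\mathcal{C}$ fixes $P_{11}=k_BT C_0$ and $P_{21}=k_B T C_1$ (hence $P_{12}=k_BT C_1^T$ by symmetry), but leaves the block $P_{22}$, the covariance of $z_1(0)$, free; this is the ``appropriate covariance'' in the statement. I would pin $P_{22}$ down by inserting $\mathbf{P}$ into the Lyapunov equation and demanding that the resulting $\mathbf{Q}$ be a genuine symmetric positive semidefinite covariance. Carrying this out requires the symmetry relations among $B_0,B_1,C_0,C_1$ that follow from the moment equations \eqref{eq: m2} together with the symmetry of $M_0,M_1,M_2,M_\infty$ (each symmetric because $A$ and $\Gamma$ are, giving $A_{21}=A_{12}^T$, $\Gamma_{21}=\Gamma_{12}^T$, and so on); these are the higher-order analogues of the identity $BC=CB^T$ used in Theorem 1. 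Verifying that the off-diagonal blocks of $\mathcal{B}\mathbf{P}+\mathbf{P}\mathcal{B}^T$ close consistently and that the diagonal blocks yield a positive semidefinite $\mathbf{Q}$ for a suitable $P_{22}$ is the crux; once this algebra closes, the stationarity of $g$ and the block-column matching together give the exact FDT, completing the proof in parallel with Theorem 1.
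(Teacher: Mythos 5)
Your framework coincides with the paper's own proof of Theorem 2: the paper likewise stacks the two auxiliary variables into a single first-order system (with state ordering $(z_1,z)$ rather than your $(z,z_1)$), applies variation of constants for the full state $u=(p,z_1,z)$, splits $z(t)$ into a memory term plus a Gaussian part $g(t)$, identifies the approximate kernel as a matrix exponential (its eq.~\eqref{eq: theta2}) whose Laplace transform is $R_{2,2}$, and derives $\langle g(t)g(t')^T\rangle = k_BT\,\theta(t-t')$ from stationarity (a Lyapunov equation) together with the fact that the relevant block column of the stationary covariance equals $k_BT$ times the input matrix $(C_1,C_0)^T$. Everything you write up to and including the block-column condition $\mathbf{P}P_1^T=k_BT\,\mathcal{C}$ is correct and is exactly the paper's mechanism.

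The gap is that you stop precisely at the step that carries the content of the theorem. The statement asserts that an ``appropriate covariance'' \emph{exists}; your proposal reduces this to ``choose $P_{22}$ so that the Lyapunov equation returns an admissible $\mathbf{Q}$'' and then defers that algebra, flagging it as the expected main obstacle. So what you have established is only the conditional claim ``if a suitable $P_{22}$ exists, the second FDT \eqref{eq: fdt3} is preserved.'' The paper closes this step constructively, and the closure is short: it demands that \emph{all} off-diagonal blocks of $DQ+QD^T$ vanish, i.e.\ that the three noises $f_1$, $\zeta_1$, $\zeta$ be mutually uncorrelated. The cross blocks involving $p$ then force $Q_2=C_0$ and $Q_{12}=C_1$ (this reproduces your condition $\mathbf{P}P_1^T=k_BT\,\mathcal{C}$ from a different requirement), and the remaining $(z_1,z)$ cross block pins down the block you left free,
\begin{equation*}
Q_1 \;=\; -\,C_0^T B_1^T - B_0 C_1^T ,
\end{equation*}
which is the paper's choice \eqref{eq: Q1-Q2}; the covariances of $\zeta_1$ and $\zeta$ are then simply read off from the diagonal blocks of $-(DQ+QD^T)$. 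Note also that the cancellation of the cross blocks is an identity once $Q$ is symmetric (since $QD^T=(DQ)^T$), so it does not require the higher-order analogues of $BC=CB^T$ that you anticipate as the crux; those relations enter only in verifying that $Q_1$ above is itself symmetric, and positive semidefiniteness of the resulting noise covariances is an admissibility issue that the paper, like you, does not address. To complete your proof you would need to exhibit this $P_{22}$ (or otherwise prove existence of an admissible one), after which your argument and the paper's are the same.
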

The proof of this theorem is provided in the Appendix D.

\medskip
{\blue From the first and second order approximations, one can already see the advantages of the rational approximation in terms of the Laplace transform. On one hand, the memory kernel in the original GLE does not need to be computed at every step. The memory effect, however, is not neglected. Rather, it is incorporated via an extended system. Clearly, solving a few additional linear differential equations is much more efficient than computing an integral at every time step. {\Blue For example, a direct solution method would involve computing the memory term at every step. At the $n$th step, this would require $n$ matrix-vector multiplications to collect terms from all previous time steps. If the total number of time steps is $N$, then the number of such operations would be about $N^2/2$. In contrast, the implementation of the model \eqref{eq: 2nd-order} would only require about $4N$ such operations in total. Of course, in a direct method, computing the memory function at each step also adds to the computational cost. }   On the other hand, the random noise $\wh{f}(t)$ is approximated by a colored noise, generated from the {\it same} extended system by just adding a white noise to each additional equation. This way, we avoid the problem of sampling the correlated noise  $\wh{f}(t)$, which in practice, can be highly nontrivial.}

Finally, we present the third-order approximation, i.e., 
$$
 \Theta(\lambda) \approx R_{3,3}(\lambda), \quad  R_{3,3}(\lambda) \overset{\text{def}}{=} \big[I - \lambda B_0 - \lambda^2 B_1 - \lambda^3 B_2\big]^{-1} \big[\lambda C_0+\lambda^2C_1+\lambda^3 C_2\big].
$$
Similarly to the second order approximation, we only need to match the limiting values as $\lambda \to 0,$ and $\lambda \to +\infty$. More specifically, we write the rational function as,
\begin{equation}
R_{3,3} \sim \lambda M_0 + \lambda^2 M_1 + \lambda^3 M_2 + \lambda^4 M_3 +\cdots \lambda^5 M_4 + \cdots,
\end{equation}
and we enforce the first five moments to match those of the exact kernel function. As a result, one can proceed as follows,
 \begin{equation}
\lambda {\blue C}_0+\lambda^2 {\blue C}_1+\lambda^3 {\blue C}_2 \sim \big[I - \lambda B_0 - \lambda^2 B_1 - \lambda^3 B_2\big] \big[ \lambda M_0 + \lambda^2 M_1 + \cdots \lambda^5 M_4 + \cdots\big].
\end{equation}
Matching the first five moments, one arrives at,
\begin{align}\label{eq: 3rdordercoeff}
\begin{split}
  C_0=&M_0,\\
  C_1 + B_0 C_0=& M_1,\\
    B_1 M_0 + C_2 =& M_2,\\
   B_0 M_2+{\red B_1M_1}+B_2M_0 =& M_3,\\
   {\red B_0 M_3+B_1M_2+B_2M_1} = &M_4,\\
  C_2=&-B_2M_\infty.
  \end{split}
\end{align}
Again the last equation comes from matching the moment $M_\infty.$

By directly substituting  the first and last equations into the third equation, one obtains a complete set of  linear equations for $B_0$, $B_1$ and $B_2$ (equations 3-5 in (\ref{eq: 3rdordercoeff})). Then, the remaining coefficients can be determined directly from the remaining three equations.  This procedure for solving the coefficients $B_i$s seems to be general.

We can continue to approximations of arbitrary order. The matching procedure involves the values of $\Theta$ which are provided here,
\begin{equation}
 M_\infty =  \big[ A_{12}, \;\Gamma_{12}\big] G^{-1}
 \left[
\begin{array}{cc}
 A_{22}^{-1} &  0\\
 0 & -I
 \end{array}
 \right]
 \left[
\begin{array}{c}
  A_{21}  \\
  \Gamma_{21}
 \end{array}
\right]. 
\end{equation}
and,
\begin{equation}
M_\ell=\frac{1}{\ell!} \Theta^{\ell}(0) =  (-1)^\ell \big[ A_{12}, \;\Gamma_{12}\big] G^{\ell}
 \left[
\begin{array}{cc}
 A_{22}^{-1} &  0\\
 0 & -I
 \end{array}
 \right]
 \left[
\begin{array}{c}
  A_{21}  \\
  \Gamma_{21}
 \end{array}
\right],
\end{equation}
for all $\ell \ge 1$.


\section{Numerical Results}

To test the effectiveness of the approximate models, several numerical tests have been conducted. As alluded to in the introduction,
we linearized the dynamics with the matrix $A$ determined from the PCA analysis. As a specific example, we consider the protein Chignolin
(PDB id 1uao, see Figure \ref{1uao}), which is a peptide with 10 {\blue residues}, amino acids bonded together by peptide bonds. Simulations have been run in TINKER \cite{ponder2004tinker} at temperature $T=298$ for .4 ns with time step 1fs. The system is set up in solvation, modeled by the generalized Born (GB) model. We then use the data upon equilibrium, and compute the matrix $A=k_B T \bl x,x^T\br^{-1}$. Two separate runs have been conducted {\blue with constant damping coefficients} {\bf (a)} $\Gamma=91 ps^{-1}$ and {\bf (b)} $\Gamma=5 ps^{-1}.$ They model respectively a high friction and a low friction case. 
The CG variables are defined using the rotational and translational blocks (RTB), which is a useful way to capture the low vibrational modes \cite{licu02,TaGaMaSa00}. More specifically, each residue is regarded as a rigid body and represented by six degrees of freedom, including three translational and three rotational modes. 
{\blue For our model system, the full model $x$ has dimension $N=414$ (three physical dimension for each particle). The CG variable $q$ has dimension $m=60$, with 6 dimensions for each residual.} 
 We comment that the RTB blocks have also been used to derive CG models, e.g., in 
\cite{essiz2006rigid}. But in \cite{essiz2006rigid} the memory effect has been ignored. 
\begin{figure}[htp]
\includegraphics[scale=0.3]{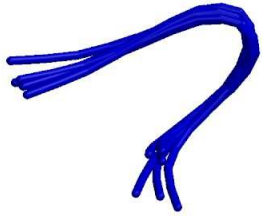}
\caption{A Cartoon view of the protein Chignolin (PDB Id: 1uao)}\label{1uao}
\end{figure}

We choose the velocity auto-correlation as a target dynamic quantity, to test the accuracy of our approximate models.  Due to the linearity, the velocity auto-correlation function can be expressed explicitly using matrix exponential. The derivation is given in Appendix \ref{auto}. The correlation function from the full dynamics is regarded as the exact result. For the approximate models, we have also derived the formulas for the auto-correlations expressed in terms of matrix exponential again, as shown in appendix \ref{auto}. All the matrix exponentials are computed in MATLAB using its built-in function {\it expm}. 

First, we compare the approximate memory functions from the first, second and third order approximations to the exact memory kernel given by \eqref{eq: theta}.  {\blue Since $\theta(t)$ is a matrix-valued function, we pick out the {\red sixth} diagonal entry of the matrix and evaluate it for the time period $t\in [0, 0.1].$ This corresponds to {\red the last rotational component} of the first residue.}  As shown in {\blue Figure \ref{fig: memker}},  our hierarchy of approximations offer increasing accuracy in the approximation of the kernel function in both cases (high friction case $\Gamma=91 ps^{-1}$ and low friction case $\Gamma=5 ps^{-1}$). {\red In the high friction case, we can observe improvement as the approximation order gets higher, and the third order approximation is the most satisfactory. In the low friction case, the kernel function is quite oscillatory. In this case, the first order approximation is not acceptable at all. The second and third order approximations show very good agreement, but only up to $t = 0.012$, and the fourth order model predicts the kernel well in a larger interval, up to $t = 0.018.$ The fourth order approximation is included here to show that the approximations still have improving accuracy.} 
This can be attributed to the fact that the moments are related to the derivatives of $\theta(t)$ at $t=0,$ and as more moments are incorporated, the accuracy of the approximation can be guaranteed for a longer period of time. The zeroth-order approximation is not shown here since it is a delta function.

\begin{figure}[htp]
\includegraphics[scale=.4]{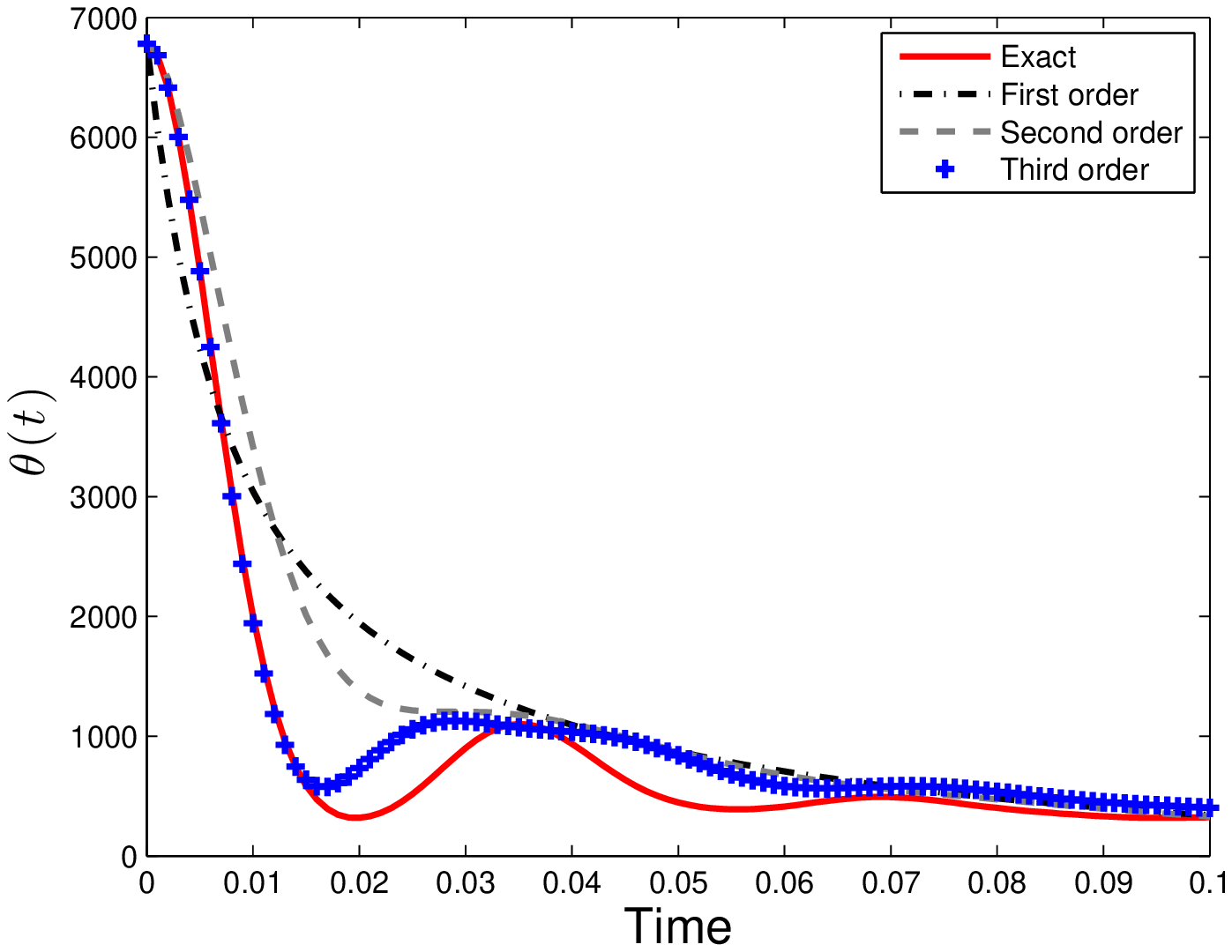}
\includegraphics[scale=.4]{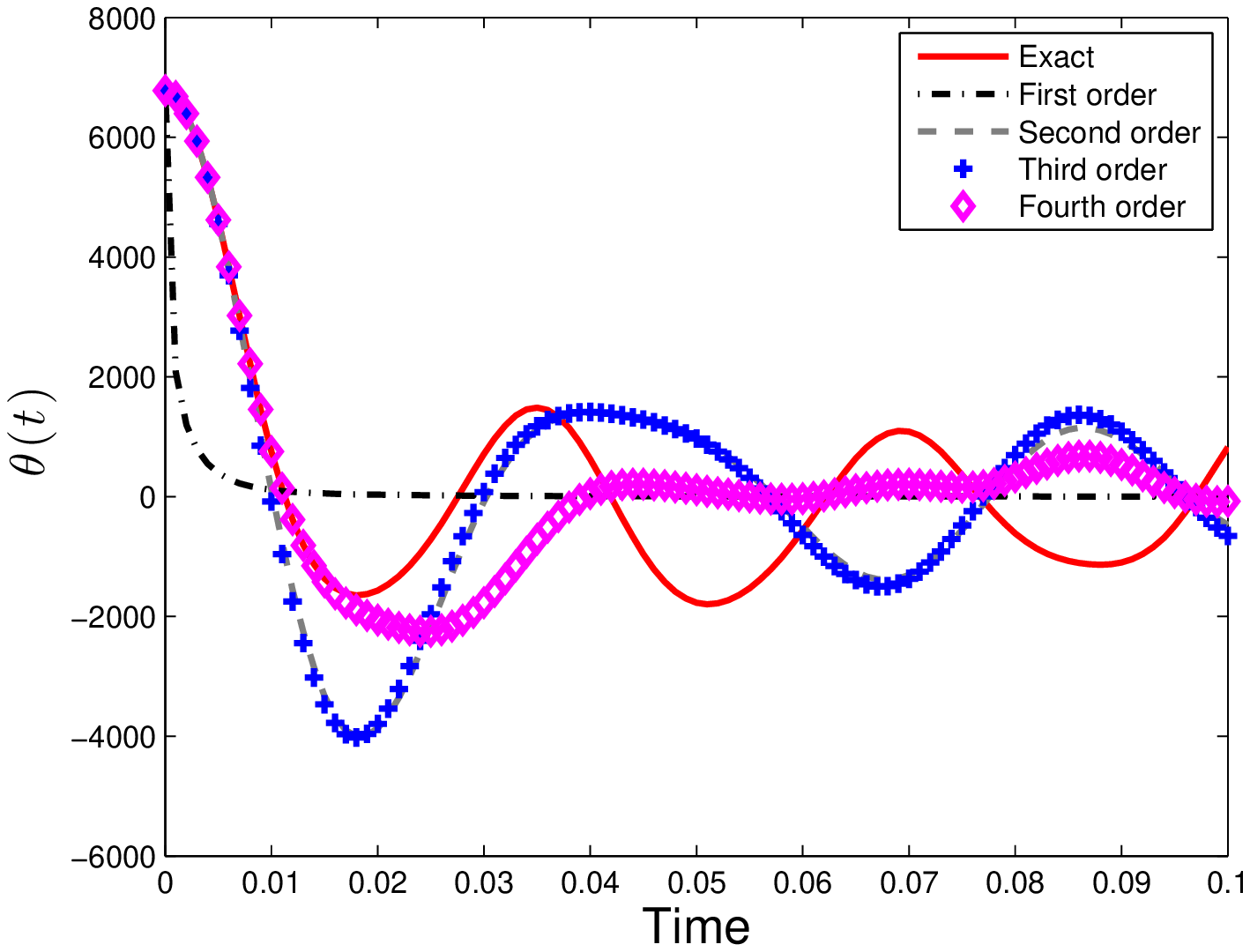}
\caption{Comparison between the exact kernel function \eqref{eq: theta} and approximations from the first, second and third order approximations. Plotted is the diagonal entry of the kernel function, $\theta_{66}(t)$ corresponding to {\red the last rotation component of the first residue.} The solid plot indicates the exact kernel function as in equation \eqref{eq: theta}, the dashed-dot and dashed lines are respectively for the first order and second order approximations, and $+$ represents the results from the third order approximation {\red and diamond corresponds to the forth order approximation in the figure on the right.} Left: $\Gamma=91 ps^{-1}$; right: $\Gamma=5 ps^{-1}$. The time scale is in pico seconds.}\label{fig: memker}
\end{figure}

Next, {\blue in Figure \ref{gm91}}, we show a comparison among the velocity auto-correlations for the case $\Gamma=91 ps^{-1}$, which is the default value in the molecular simulation package TINKER. {\blue Interested readers are referred to Appendix E for the details on the computation of the auto-correlation.}
In this case, all the time correlation functions exhibit exponential decay, indicating that the dynamics is over  damped. {\blue The correlation is already close to zero around time $t=0.1 ps$.}
In this case, the zeroth-order method gives poor result. But the results from the other three methods are in excellent agreement with the exact result.  The second and third order methods have slightly better accuracy. 
\begin{figure}[htp]
\includegraphics[scale=.5]{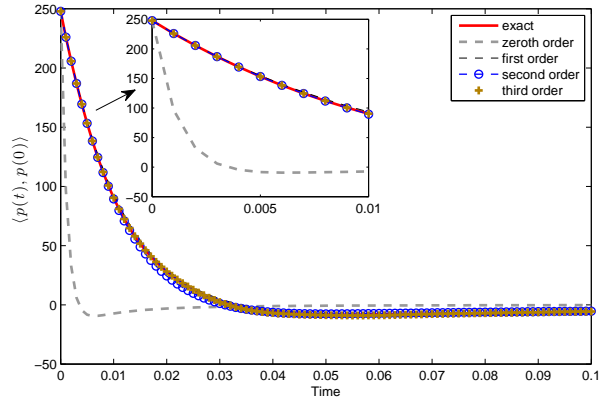}
\caption{Comparison of the velocity autocorrelation  from the exact and approximate models for $\Gamma=91 ps^{-1}$. Plots are for the {\red last rotation component} of the first residue. The time scale is in pico seconds. }\label{gm91}
\end{figure}

\medskip

{\blue  Following the previous experiment,  } we repeat the computation with damping coefficient $\Gamma=5 ps^{-1}$, and  the results are shown  in Figure \ref{gm5}.
In this case, the time correlation functions start to shown oscillatory patterns, {\Blue indicating that the memory effect is much stronger.} {\blue In light of the slow decay, we present results for a longer time period compared to the over-damped case}. Again, we see that the zeroth order approximation gives poor results, while the first-order method give is slightly better. Meanwhile, the second  and third order methods provide significant improvement around $t=0$. The inset figure shows a close-up view of the resulting correlation functions near $t=0.$ 
\begin{figure}[htp]
\includegraphics[scale=.5]{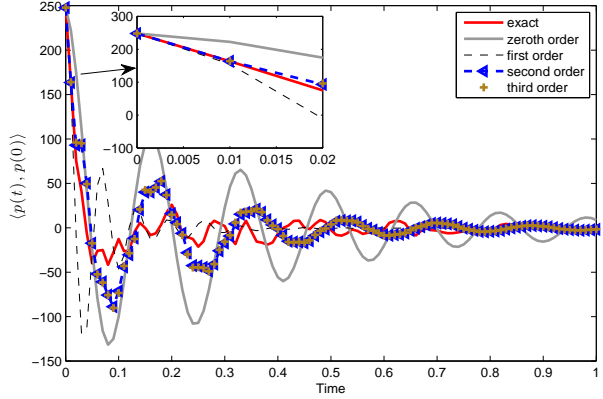}
\caption{Comparison of the velocity autocorrelation from the exact and approximate models with $\Gamma=5 ps^{-1}.$ Plots are for the last rotation component of the first residue.  The time scale is in pico seconds. The inset figure shows a close-up view of the resulting correlation functions near $t=0.$ 
}\label{gm5}
\end{figure}
\bigskip

\section{Conclusion}

This paper presented a derivation of a coarse-grained model from the full Langevin dynamics. The derivation has been focused on the resulting random noise, memory effect,  and the fluctuation-dissipation theorem, which is a necessary condition for the coarse-grained model to have the correct equilibrium statistics. Our main finding is a
 generalization of the generalized Langevin dynamics, together with a combined form of the first and second fluctuation-dissipation theorem. 

In the second part of the paper, a systematic approach to approximate the memory term was illustrated. The novel 
aspect is a rational approximation in the Laplace domain, which in the time domain, corresponds to an extended system with {\it no memory}.  This significantly reduces the computational cost. Furthermore, it has been shown that the random noise term in the generalized Langevin equation can be approximated indirectly by introducing white noises in the extended system. More importantly, the fluctuation-dissipation theorem still holds at each level of approximations. This is a property that has not been emphasized in other approximation methods, e.g., \cite{tuckerman1991stochastic,lange2006collective,IzVo06,berkowitz1981memory,fricks2009time,li2014construction,Darve_PNAS_2009}.

The current approach can be extended/improved in several directions. First, 
 a Hermite type of interpolation has been used in the approximation of the Laplace transform of memory function, and the interpolation is done at $\lambda=0$ and $\lambda=+\infty (s=0).$  It is clear that one can introduce other data points or interpolation methods to enhance the accuracy of the approximation.  {\blue As a demonstration, we did a simple test simulation (results shown  in Figure \ref{gm5a}) using the same interpolation points at $\lambda=0$ and $\lambda=+\infty$ but with different order of derivatives involved. In short, for the second order scheme $R_{2,2}$, we determine the four coefficients in the rational function as follows: We matched first and second derivatives at $\lambda=0$, and zeroth and first derivatives at $\lambda= +\infty$ (or $s=0$). For the third order scheme, for the two additional coefficients, we matched the third derivative at $\lambda=0$ and second derivative at $\lambda=+\infty$. The results are overall more satisfactory than our previous choices, indicating that there is a lot of flexibility in choosing the matching conditions. } {\Blue This approach would be more useful for the cases where the memory effect is much stronger.}
 \begin{figure}
 \includegraphics[scale=.5]{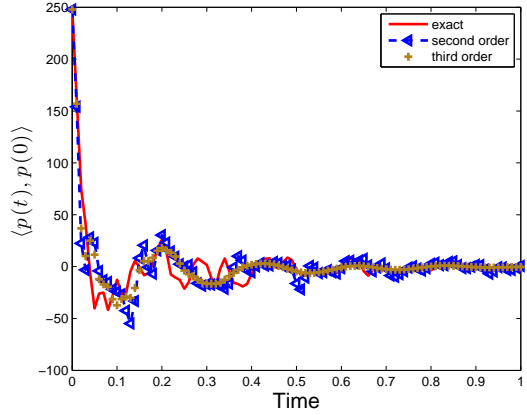}
 \caption{Comparison of the velocity autocorrelation from the exact and approximate models ($R_{2,2}$ and $R_{3,3}$) with $\Gamma=5 ps^{-1}.$ The matching conditions for $R_{2,2}$ involve two conditions at each end, and the conditions for $R_{3,3}$ contain three conditions at each end.  Plots are for the last rotation component of the first residue.  The time scale is in pico seconds. }\label{gm5a}
 \end{figure}

Secondly, we have only tested the methods for the case when the mean force is linear, e.g., an elastic network type of model. In this case, explicit forms for all the solutions are available, so that direct comparison can be made without the influence of the numerical error. It would be of great practical importance to test problems with a more realistic potential of mean forces, e.g., the ones obtained from the coarse-grained force field \cite{noid_multiscale_2008,monticelli2008martini}.  

In this paper, we have based our approximation on the moments $M_\ell$ \eqref{eq: moms}, which can be extracted from  the spectra of the molecular structure (the matrix $A$) and the damping coefficient ($\Gamma$). We would like to mention a data-driven approach, which makes use of the time series of the coarse-grain variables, and formulate the problem as an inverse problem. For instance, the Kalman filter technique has been used in \cite{fricks2009time,harlim2015parametric} to estimate the parameters $B_0$ and $C_0$ in the first order model \eqref{eq: 1st-order}, and in \cite{lei2016generalized}, the moments $M_i$ are directly linked to the correlations of the CG variables, which in turn determine the coefficients $B_i$s and $C_i$s. In all these works, the rational approximation in terms of the Laplace transform has been crucial.  Which approach is more appropriate depends on the information available to the practitioners. 

{\Blue Another interesting scenario is when the GLE is used to model subdiffusive behavior. One well-known example is where the kernel function obeys a power law \cite{kou2008stochastic}. In this case, we anticipate the current methodology to be useful up to certain time scale. When the long-time sub-diffusive behavior is of interest, the method certainly has to be modified. For example, when the kernel takes the form of $t^\alpha$, the Laplace transform will exhibits a singularity at the origin. Meanwhile, the current rational approximating function approaches to a finite value, and therefore the form of the rational function has to be modified accordingly in order to take into account the singularity. This would be an interesting line of work for us to pursue further.}

Finally, it is possible for the kernel function to depend on the current state of the coarse-grain variables, which means that they have to be continuously updated. These issues are important for the application to protein simulations that involve conformational changes, and they will be considered in separate works. 

The derivations presented in this paper, along with the calculations of the velocity correlation functions, involve some important, but lengthy mathematical manipulations. We included the details in the Appendix for interested readers.

\begin{acknowledgements}
This research was supported by NSF under grant DMS-1412005, DMS-1216938 and DMS-1619661. 
\end{acknowledgements}

\bigskip

\appendix

\numberwithin{equation}{section}

\section{Time correlation for linear Langevin models}

For linear Langevin dynamics, the velocity auto-correlation can be computed explicitly. 
This section illustrates the calculations.

Suppose that we have a linear Langevin dynamics model,
\begin{equation}\label{eq: lin'}
  \ddot{ u}= -A  u - \Gamma \dot{u} + W. 
\end{equation}

We may write it {\blue into a first order system} as follows,
\begin{equation}\label{eq: lin}
  \dot{ w} = D w + \Sigma {\blue \mu(t)}, 
\end{equation}
in which,
\begin{equation}
{\blue w=\left(\begin{array}{c}u \\p\end{array}\right),\quad
\mu=\left(\begin{array}{c}0 \\W\end{array}\right),\quad}
  D=
\left[
\begin{array}{cc}
  0 \;\;& I    \\
  -A \;\;& -\Gamma I    
\end{array}
\right], \quad \Sigma=
\left[
\begin{array}{cc}
  0 & 0    \\
  0 &\;\; \;\;2k_B T \Gamma I    
\end{array}
\right].
\end{equation}

For the linear Langevin dynamics, the equilibrium probability density is given by,
\begin{equation}
 \rho \sim e^{-\beta H}, \quad H= \frac12 u^T A u + \frac12 p^2,
\end{equation}
with $p=\dot{u}$ and $\beta=\frac{1}{k_B T}.$

Therefore, the covariance of the solution $w$ is given by,
\begin{equation}
Q= k_B T
\left[
\begin{array}{cc}
  A^{-1} \;\;& 0    \\
  0 &  I    
\end{array}
\right].
\end{equation}

Notice that $DQ + QD^T = -\Sigma$. This is known as the Lyapunov equation. In particular, when $w(0)$ is Gaussian with covariance $Q$, $w(t)$ is a stationary Gaussian process with time correlation given by, 
\begin{equation}
 \big\langle  w(t)  w(0)^T \big\rangle=k_B T e^{tD}Q.
\end{equation}
This formula will be used in many of our calculations.


{\blue Applying this formula to the full model, we find the time correlation of the coarse-grained momentum $p$,}
\begin{equation}
  \big\langle  p(t)  p(0)^T \big\rangle=k_B T  [ 0,\, \Phi^T] e^{tD}Q
\left[
\begin{array}{c}
  0    \\
  \Phi   
\end{array}
\right].
\end{equation}

\medskip
\section{The derivation of the GLE}
\subsection{Derivation of the memory kernel}\label{memdef}
We start with the last two equations in \eqref{eq: lg'}. To begin with, we recall the matrix $G$, defined in \eqref{eq: Gmat}.
Notice that the matrix can be factorized as follows,
\begin{equation}\label{eq: g-fac1}
 G=\left[\begin{array}{cc} 0 & I \\ I &\;\;\;\;-\Gamma_{22}\end{array}\right]\left[\begin{array}{cc} A_{22}\;\; & 0 \\ 0 &-I\end{array}\right].
\end{equation}
or,
\begin{equation}\label{eq: g-fac2}
 G=\left[
 \begin{array}{cc} 0 & -I \\ 
 I &\;\;\;\;\Gamma_{22}\end{array}
 \right]
 \left[
 \begin{array}{cc} 
 A_{22} \;\;\;\;& 0 \\
  0 &I
  \end{array}
  \right].
\end{equation}
It is also useful to have the inverse of $G$, given by,
\begin{equation}
G^{-1}=
\left[
\begin{array}{cc}
 A_{22}^{-1}\Gamma_{22} &  \;\;\;\;A_{22}^{-1}\\
 -I & 0
 \end{array}
 \right].
\end{equation}

Now the last two equations in \eqref{eq: lg'} can be expressed explicitly as, 
\[
\left[
\begin{array}{c}
 \xi(t)\\
\eta(t)
 \end{array}
 \right]= e^{-G t} 
 \left[
\begin{array}{c}
 \xi(0)\\
\eta(0)
 \end{array}
 \right]
 - \int_0^t e^{-G(t-s)}
 \left[
\begin{array}{c}
  0\\
 A_{21} q(s) + \Gamma_{21} p(s) 
 \end{array}
 \right] ds
 + 
 \int_0^t e^{-G(t-s)} \left[
\begin{array}{c}
  0 \\
{\blue f_2(s)}
 \end{array}
 \right] ds. \]

We take part of the memory term, and integrate by parts: 
\begin{align*}
&\int_0^t e^{-G(t-s)}
\left[\begin{array}{c} 0\\ A_{21} q(s)\end{array} \right]ds = 
e^{-G(t-s)}G^{-1} 
\left[\begin{array}{c} 0\\ A_{21} q(s) \end{array} \right] \Big|_0^t
- \int_0^t e^{-G(t-s)}G^{-1}
\left[\begin{array}{c} 0\\A_{21}p(s) \end{array} \right]ds\\
=&\left[ \begin{array}{c} A_{22}^{-1} A_{21} q(t) \\ 0 \end{array} \right]
-e^{-Gt} 
\left[ \begin{array}{c} A_{22}^{-1} A_{21} q(0) \\ 0 \end{array} \right]
-\int_0^t e^{-G(t-s)}\left[ \begin{array}{c} A_{22}^{-1} A_{21} p(s) \\ 0 \end{array} \right] ds
\end{align*}

Combining this with the remaining term in the  memory  integral, we have,
 \begin{equation}
 \begin{aligned}
 - \int_0^t e^{-G(t-s)}
 \left[
\begin{array}{c}
  0\\
 A_{21} q(s) + \Gamma_{21}p(s) 
 \end{array}
 \right] ds
 =& \int_0^t e^{-G(t-s)} 
 \left[
\begin{array}{c}
  A_{22}^{-1} A_{21}  \\
  -\Gamma_{21} 
 \end{array}
 \right] p(s)ds \\
 + &e^{-Gt} 
 \left[
\begin{array}{c}
  A_{22}^{-1} A_{21}  \\
 0
 \end{array}
\right] q(0)
- 
\left[
\begin{array}{c}
  A_{22}^{-1} A_{21}  \\
0
 \end{array}
\right] q(t).
\end{aligned}
\end{equation}
 
In the next step, we will substitute $\displaystyle \left[ \begin{array}{c} \xi(t)\\ \eta(t) \end{array}\right]$ into the first two equations  in (\ref{eq: lg'}), to eliminate the additional degrees of freedom and derive an effective equation for $q(t) $ and $p(t)$.

For clarity, we introduce more notations,
 \begin{equation}
 F_\text{eff}(q)= \Phi^T F(\Phi q) -A_{12}A_{22}^{-1}A_{21}q, \quad  \widehat{\xi}= \xi + A_{22}^{-1}A_{21} q,
\end{equation}
and,
\begin{align}
 \theta(t)= 
 \big[ A_{12}, \;\Gamma_{12}\big] e^{-Gt} 
 \left[
\begin{array}{cc}
 A_{22}^{-1} \;\;&  0\\
 0 & -I
 \end{array}
 \right]
 \left[
\begin{array}{c}
  A_{21}  \\
   \Gamma_{21}
 \end{array}
\right]. 
\end{align}

Collecting terms, we find that,
\begin{equation}
\left\{
\begin{aligned}
 \dot{q}= &  p, \\
   \dot{p}=& F_\text{eff}(q) - \Gamma_{11} p - \int_0^t \theta(t-s)  p(s)ds + \widehat{f}.
\end{aligned}
\right.
\end{equation}

This is a generalized Langevin equation with an additional damping, in the form of a memory term. The new random force is given by,
\begin{equation}\label{eq: f-ran}
\widehat{f} = {\blue f_1(t)}
  - \big[ A_{12}, \;\Gamma_{12}\big]  
 \int_0^t e^{-G(t-s)} \left[
\begin{array}{c}
  0 \\
{\blue f_2(s)}
 \end{array}
 \right] ds
 - \big[ A_{12}, \;\Gamma_{12}\big]
e^{-G t} 
 \left[
\begin{array}{c}
 \widehat{\xi}(0)\\
\eta(0)
 \end{array}
 \right]
. 
\end{equation}

\subsection{The fluctuation-dissipation theorem}\label{fluc}
Here we look at the random noise term and see how it is related to the damping coefficients.
Let the three terms in \eqref{eq: f-ran} be $f_1$, $\blue \widehat f_2$ and $f_3$, respectively.
One can see directly that,
\begin{equation}
 \bl f_1(t) f_1(t')^T \br = 2k_B T\, \Gamma_{11} \delta(t-t').
 \end{equation}

For $\blue f_3(t)$, we have,
\begin{equation}\label{eq: f2f2}
\begin{aligned}
\bl {\blue  f_3(t) f_3(t')}^T\br &= [A_{12}, \Gamma_{12} ] e^{-Gt} 
\left[ \begin{array}{cc} \bl \widehat{\xi}(0) \widehat{\xi}(0)^T \br &\bl \widehat{\xi}(0)\eta(0)^T \br \\ \bl \eta(0) \widehat{\xi}(0)^T \br &\bl \eta(0)\eta(0)^T\br \end{array}\right] 
e^{-G^T t'} \left[ \begin{array}{c} A_{21}\\\Gamma_{21} \end{array}\right]\\
&=k_B T[A_{12}, \Gamma_{12} ] e^{-Gt} \left[ \begin{array}{cc} A_{22}^{-1} \;\;\;\;&0\\ 0 &I\end{array}\right] 
e^{-G^T t'} \left[ \begin{array}{c} A_{21}\\\Gamma_{21} \end{array}\right]
\end{aligned}
\end{equation}

We now consider $\blue \widehat f_2(t).$ Assume that $t'\le t,$ we have, 
\[
\begin{aligned}
\bl{\blue \widehat f_2}(t){\blue \widehat f_2}(t')^T\br&=k_BT \big[ A_{12}, \;\Gamma_{12}\big]  
 \int_0^{t'} e^{-G(t-s')}
  \left[
\begin{array}{cc}
  0 & 0 \\
  0 &\;\;\;\;2 \Gamma_{22} 
   \end{array} \right] 
e^{-G^T(t'-s')}
ds'  \left[ \begin{array}{c} A_{21}\\\Gamma_{21} \end{array}\right]
\end{aligned}
\]

We notice that 
\[
  G \left[ \begin{array}{cc} A_{22}^{-1} \;\;\;\;&0\\ 0 &I\end{array}\right] +  \left[ \begin{array}{cc} A_{22}^{-1} \;\;\;\;&0\\ 0 &I\end{array}\right] G^T=  \left[\begin{array}{cc}0 & 0 \\0 & \;\;2\Gamma_{22} \end{array}\right].\]
Therefore, this integral can be simplified to,
\begin{equation}\label{eq: f3f3}
\begin{aligned}
\bl {\blue \widehat f_2}(t){\blue \widehat f_2}(t')^T\br&=k_BT   [A_{12}, \Gamma_{12} ] 
 e^{-G(t-t')} \left[ \begin{array}{cc} A_{22}^{-1} \;\;\;\;&0\\ 0 &I\end{array}\right]  \left[ \begin{array}{c} A_{21}\\
 \Gamma_{21} \end{array}\right] \\
  &- k_BT  [A_{12}, \Gamma_{12} ] 
  e^{-Gt} \left[ \begin{array}{cc} A_{22}^{-1} \;\;\;\;&0\\ 0 &I\end{array}\right] 
e^{-G^T t'} \left[ \begin{array}{c} A_{21}\\\Gamma_{21} \end{array}\right]
\end{aligned}
\end{equation}

{ The second term will  be cancelled by $\bl{\blue \widehat f_2}(t){\blue \widehat f_2}(t')^T \br$. But the first term is slightly different from the memory function $\theta(t).$ In particular, the matrix in the middle has an entry $I$ instead of $-I.$}

To complete the derivation, we have to compute the cross terms $\bl f_1(t) {\blue \widehat f_2}(t')^T \br$
and $\bl {\blue \widehat f_2}(t) f_1(t')^T \br$. It is straightforward to show that  $\bl f_1(t) {\blue \widehat f_2}(t')^T \br=0.$
For the other term, we have,
\begin{equation}
\begin{aligned}
 \bl {\blue \widehat f_2}(t) f_1(t')^T \br=& - 2k_B T\big[A_{12}, \Gamma_{12} \big] e^{-G(t-t')} 
 \left[ \begin{array}{c}  0\\ \Gamma_{21} \end{array}\right]\\
 =&  k_BT   [A_{12}, \Gamma_{12} ] 
 e^{-G(t-t')} \left[ \begin{array}{cc} 0 &0\\ 0 &\;\;\;\;-2 I\end{array}\right]  \left[ \begin{array}{c} A_{21}\\\Gamma_{21} \end{array}\right] .
 \end{aligned}
\end{equation}
This term can be combined with the first term in \eqref{eq: f3f3}, and it gives $\theta(t)$.

This proves the fluctuation-dissipation theorem:
\begin{equation}
 \bl \widehat{f}(t) \widehat{f}(t')^T \br = 2k_B T \Gamma_1 \delta(t-t') + k_B T \theta(t-t').
\end{equation}

\bigskip
{\Blue
A natural extension of the current framework is to Langevin dynamics models, in which the damping coefficient is depends on the position of the particles. For instance, in the dissipative particle dynamics (DPD) models \cite{espanol1995statistical,hoogerbrugge1992simulating}, they are expressed as functions of the inter-particle distances. An immediate observation is that $\Gamma$ depends on the current time, the stochastic model will have variable coefficients.  In this case,  we define the matrix $G$ as in \eqref{eq: g-fac1}, but write it as $G(t)$ to show the time-dependence.   To facilitate the derivation, we introduce the fundamental matrix, defined by the ODEs
\begin{equation}
 \frac{\partial}{\partial t} Y(t,s) = -G(t) Y(t,s), Y(s,s)=I.
\end{equation}
It also satisfies the equation, $ \frac{\partial}{\partial s} Y(t,s) =  Y(t,s) G(s).$

With the fundamental matrix, we can write the solution of the last two equations in \eqref{eq: lg'}   as follows,
\begin{equation}
\begin{aligned}
 \left[\begin{array}{c}\xi(t) \\ \eta(t)\end{array}\right]=& Y(t,0)   \left[\begin{array}{c}\xi(0) \\ \eta(0)\end{array}\right]
 + \int_0^t Y(t,s) \left[\begin{array}{c}0 \\ \sigma w(s) \end{array}\right] ds\\
 &+ \int_0^t Y(t,s) \left[\begin{array}{c}0 \\ -A_{21} q(s) - \Gamma_{21}(s) p(s) \end{array}\right] ds.
\end{aligned}
\end{equation}
Here, to demonstrate the ideas more easily, we have omitted the pair-wise form of the damping coefficients in DPD and simply wrote it in a matrix form. 

The remaining steps are the same as the derivation in the previous section. In particular, the memory term becomes,
\begin{equation}
- \int \theta(t,t') p(t') dt', \quad \text{with} \; \theta(t,t')= [A_{21} \;\Gamma_{21}(t) ] Y(t,t') \left[\begin{array}{c}A_{22}^{-1} A_{12} \\ -\Gamma_{21}(t') \end{array}\right].
\end{equation}

The random noise  is still a  Gaussian process, having time correlation,
\begin{equation}
  \bl \widehat{f}(t) \widehat{f}(t')^T \br = 2k_B T \Gamma_{11}(t) \delta(t-t') + k_B T  \theta(t,t'). 
\end{equation}

 The main observation here is that the noise is no long a stationary process, since the correlation can not be written as a function of $t-t',$ and the memory kernel is no longer a convolution.   }

\section{Properties of the memory kernel}

We can show that this matrix is symmetric. 
\begin{align*}
&\theta(t)= \big[ A_{12}, \;\Gamma_{12}\big] \sum_n \frac{t^n}{n!} G^n
 \left[
\begin{array}{cc}
 A_{22}^{-1} \;\;\;\;&  0\\
 0 &- I
 \end{array}
 \right]
 \left[
\begin{array}{c}
  A_{21}  \\
   \Gamma_{21}
 \end{array}
\right]. \\
=& \sum_n \frac{t^n}{n!} \big[ A_{12}, \;\Gamma_{12}\big]  \left[\begin{array}{cc} 0 & I \\ I &-\Gamma_2\end{array}\right]\left[\begin{array}{cc} A_{22} \;\;\;\;& 0 \\ 0 &-I\end{array}\right] \cdots\left[\begin{array}{cc} 0 & I \\ I &-\Gamma_2\end{array}\right]\left[\begin{array}{cc} A_{22} \;\;\;\;& 0 \\ 0 &-I\end{array}\right]
 \left[
\begin{array}{cc}
 A_{22}^{-1} \;\;\;\;&  0\\
 0 & -I
 \end{array}
 \right]
 \left[
\begin{array}{c}
  A_{21}  \\
  \Gamma_{21}
 \end{array}
\right]. 
\end{align*}
\begin{align*}
\theta^T(t) = \sum_n \frac{t^n}{n!} \big[ A_{12}, \;\Gamma_{12}\big]  \left[\begin{array}{cc} 0 & I \\ I &-\Gamma_2\end{array}\right]\left[\begin{array}{cc} A_{22} \;\;& 0 \\ 0 &-I\end{array}\right] \cdots\left[\begin{array}{cc} 0 & I \\ I &-\Gamma_2\end{array}\right]
 \left[
\begin{array}{c}
  A_{21}  \\
 \Gamma_{21}
 \end{array}
\right] = \theta(t)
\end{align*}

In addition, we see that,
\begin{equation}
\theta(0)=A_{12} A_{22}^{-1} A_{21}- \Gamma_{12} M_2^{-1} \Gamma_{21}. 
\end{equation}

Finally,
\begin{equation}\label{memoryk}
\int_0^{\infty} \theta(t) dt = 
 \big[ A_{12}, \;\Gamma_{12}\big] G^{-1} 
 \left[
\begin{array}{cc}
 A_{22}^{-1}\;\;\;\; &  0\\
 0 & -I
 \end{array}
 \right]
 \left[
\begin{array}{c}
  A_{21}  \\
  \Gamma_{21}
 \end{array}
\right]. 
\end{equation}

\section{The proof of Theorem 2}\label{sec: 2nd}
Using the form of the rational function $R_{2,2}$ \eqref{eq: r22} and the properties of Laplace transform, we can write down a differential equation for the approximate memory kernel,
\begin{equation}
\theta''= B_0 \theta'+ B_1 \theta,
\end{equation}
together with the initial conditions,
\begin{equation}
 \theta(0)=M_0, \quad \theta'(0)=M_1
\end{equation}
which are drawn from the interpolation conditions \eqref{eq: m2}.

By defining $\theta_1=\theta'-B_0\theta,$ we can write this in a first order form,
\begin{equation}
\begin{aligned}
& \theta'_1 =  B_1 \theta, \\
 &\theta' =  B_0 \theta + \theta_1, \\
 &\theta(0) =  C_0, \quad \theta_1(0)=M_1 - B_0C_0.
\end{aligned}
\end{equation}
From the second matching conditions \eqref{eq: m2}, we find that $\theta_1(0)=C_1.$

As a result, the approximate memory kernel can be written in a matrix exponential form,
\begin{equation}\label{eq: theta2}
 \theta(t)= 
 [ 0 \quad I] e^{ t \wh{B}
 } 
   \left[\begin{array}{c}
     C_0 \\ 
     C_1 
     \end{array}\right], \quad \wh{B}= \left[\begin{array}{cc}
   0\quad &\; B_1 \\
   I\quad &\; B_0 \\
   \end{array}\right].
\end{equation}

We will derive the initial covariance for the second order approximation (\ref{eq: 2nd-order}). 
Consider the linear system as in Appendix \eqref{eq: lin} for {\red $u=(p,z_1,z)^T$}. In particular, we have,
\begin{align*}
D=\left[\begin{array}{ccc} -\Gamma_{11} \;\;\;\;& 0\;\;\;\; & -I \\{\blue C}_1 & 0 \;\;\;\;& B_1 \\
{\blue C}_0 & I \;\;\;\;& B_0\end{array}\right].
\end{align*}
Let us choose the initial condition for $u$ as Gaussian with mean zero and covariance,
\begin{align*}
Q=\left[\begin{array}{ccc}I \;\;\;\;& 0\;\; & 0 \\0\;\;\;\; & Q_1\;\; & Q_{12} \\
0\;\;\;\;& Q_{12}^T\;\; & Q_2\end{array}\right],
\end{align*}
then
\begin{align*}
DQ=\left[\begin{array}{ccc}-\Gamma_{11} & -Q_{12}^T & -Q_2 \\
{\blue C}_1 & B_1Q_{12}^T & B_1Q_2 \\
{\blue C}_0 &\;\; Q_1+B_0Q_{12}^T &\;\;\;\;Q_{12}+B_0Q_2\end{array}\right].
\end{align*}

We seek a simple case when $DQ$ is an asymmetric matrix, which leads to the choices,
\begin{equation}\label{eq: Q1-Q2}
Q_1=-{\blue C}_0^TB_1^T-B_0{\blue C}_1^T, \quad \\
 \quad  Q_2={\blue C}_0, \quad \\
\quad  Q_{12}={\blue C}_1.
\end{equation}

In light of the Lyapunov equation for stochastic differential equations \cite{risken1984fokker}, this gives the covariance matrix for the random noise $\blue \zeta(t)$ and $\zeta_1(t)$ in the second order model. More importantly, the resulting solution will become a stationary Gaussian process thanks to the Lyapunov condition.

{\red With the initial covariance and the covariance of the noise $(\zeta_1(t), \zeta(t))$ determined, we can solve the two equations for $z_1(t)$ and $z(t)$, and substitute it back to the second equation (\ref{eq: 2nd-order}). Similar to the proof of Theorem 1, we find three terms, 
  \begin{equation}
 z(t) = [0\quad I] \left\{ e^{tD}  \left[\begin{array}{c}
     z_1(0) \\ 
     z(0)
     \end{array}\right]
       + 
        \int_0^t e^{(t-\tau) D}  \left[\begin{array}{c}
     \zeta_1 (\tau) \\ 
     \zeta(\tau) 
     \end{array}\right] d
     \tau + 
       \int_0^t e^{(t-\tau) D}  \left[\begin{array}{c}
     C_1 \\ 
     C_0 
     \end{array}\right] p(\tau) d
     \tau \right\}.
\end{equation}

We immediately see that the last term gives rise to an approximation to the memory term, with memory kernel exactly given by \eqref{eq: theta2}, which as explained at the beginning of this section, correspond to the rational approximation of the Laplace transform \eqref{eq: r22}. In addition, the first two terms form a stationary Gaussian process, denoted by $g(t)$, since the Lyapunov condition has been imposed. This $g(t)$ will lead to an approximation of the colored noise $\wh{f}(t)$ in the CG model \eqref{eq: GLE}. In particular, the time correlation of this process is given by, 
\begin{equation}
\big\langle g(t) g(t') \big\rangle=  k_B T [0\quad  I] e^{(t-t')D} Q \left[\begin{array}{c}
     0  \\ 
     I 
     \end{array}\right].
\end{equation}
From \eqref{eq: Q1-Q2}, we find that,
\[Q \left[\begin{array}{c}
     0  \\ 
     I 
     \end{array}\right] = 
      \left[\begin{array}{c}
     C_0  \\ 
     C_1
     \end{array}\right],\]
which implies that,
\[\big\langle g(t) g(t') \big\rangle=  k_B T \theta_2(t-t'),\]
proving the consistency.

}


\section{The derivation of the  time correlation for the approximations to the GLE}\label{auto}
{\blue We start with the general Langevin equations,  written as,
\begin{align*}
&\dot{q}=p,\\
&\dot{p}=-{\blue A}q-\Gamma p - \int_0^t \theta(t-\tau) p(\tau) d\tau + {\blue f(t)}. 
\end{align*}}

Assume that the noise term is independent of $p(0)$. We define 
\begin{align*}
&D(t)=\langle q(t), q(0)^T\rangle,\\
&{\blue H}(t)=\langle q(t), p(0)^T \rangle,\\
&{\blue E}(t)=\langle p(t), p(0)^T \rangle.
\end{align*}
By multiplying the GLE by $q(0)$ and $p(0)$ and taking averages, one can derive the following equations for the correlation functions:
\begin{align*}
&\dot{D} = - {\blue H},\\
&\dot{{\blue H}} = {\blue E},\\
&\dot{\blue E} = - {\blue A H}  - \Gamma {\blue E} - \int_0^t \theta(t-\tau) {\blue E}(\tau) d\tau.
\end{align*}

We now define the memory term in this system ${\blue Z}=\int_0^t \theta(t-\tau) {\blue E}(\tau) d\tau$, and similar to our derivation of the first order approximation to the memory kernel function, we find that,
\begin{align*}
\blue
\dot{Z}=B_0 Z + C_0 E, \quad Z(0)=0.
\end{align*}

Then, the system for the correlation function of the first order approximation  becomes:
{\blue
\begin{align*}
&\dot{D} = - H,\\
&\dot{H} = E,\\
&\dot{E} = - {A} H - \Gamma E - Z,\\
&\dot{Z}=B_0 Z + C_0 E,\\
&H(0)={\bf{0}},\quad E(0) = k_B T I,\quad Z(0) = {\bf 0}. 
\end{align*}
}
Similarly, the corresponding  equations for the second-order approximation are given by,
\begin{align*}
&\dot{D} = - H,\\
&\dot{H} = E,\\
&\dot{E} = - {A} H - \Gamma E - Z,\\
&\dot{Z}=Z_1+ B_0 Z + C_0 E,\\
&\dot{Z_1} = B_1 Z + C_1 E,\\
&H(0)={\bf{0}},\quad  E(0) = k_B T I, \quad  Z(0) = {\bf 0}, \quad Z_1(0)={\bf 0}. 
\end{align*}

And we can also derive the equations for the correlation functions from the third-order model,
{\blue
\begin{align*}
&\dot{D} = - H,\\
&\dot{H} = E,\\
&\dot{E} = - {A} H - \Gamma E - Z,\\
&\dot{Z}=Z_1+ B_0 Z + C_0 E,\\
&\dot{Z_1} = Z_2+ B_1 Z + C_1 E,\\
&\dot{Z_2}=B_2Z + C_2 E,\\
&H(0)={\bf{0}},\quad  E(0) = k_B T I,\quad  Z(0) = {\bf 0},\quad  Z_1(0)={\bf 0},\quad Z_2(0)={\bf 0}. 
\end{align*}
} 

Once we write these unknown quantities in the form of linear system of autonomous ordinary differential equations,
the solutions are readily available. In particular, they can expressed in  terms of the fundamental solutions, in the form of matrix exponential. We can then evaluate them directly using methods from numerical linear algebra.  

\bibliographystyle{plain}

\bibliography{mem0,GLERef1}
\end{document}